\newtheorem{thm}{Theorem}[section]
\newtheorem*{thm*}{Theorem~1.2}
\newtheorem*{thm**}{Theorem~1.3}
\newtheorem{df}[thm]{Definition}
\newtheorem{cor}[thm]{Corollary}
\newtheorem*{cor*}{Corollary~1.3}
\newtheorem{lem}[thm]{Lemma}
\newtheorem{ex}[thm]{Example}
\newtheorem{rem}[thm]{Remark}
\newcommand{\Pic}{\operatorname{Pic}}
\newcommand{\ord}{\operatorname{ord}}
\newcommand{\Pre}{\operatorname{Preper}}
\newcommand{\Per}{\operatorname{Per}}
\newcommand{\pp}{\mathbb{P}}
\newcommand{\ox}{\mathcal{O}}
\newcommand{\PGL}{\operatorname{PGL}}
\begin{document}

\title[Equidistribution on $K3$ surface]{Equidistribution of periodic points of\\some automorphisms on $K3$~surface}

\author{Chong Gyu Lee}

\keywords{equidistribution, height, dynamical system, K3 surface, automorphism}

\date{\today}

\subjclass{Primary: 14G40, 11G50 Secondary: 37P30, 14J28}

\address{Department of Mathematics, University of Illinois at Chicago, Chicago IL 60607, US}

\email{phiel@math.uic.edu}

\maketitle

\begin{abstract}
     We say $(W, \{\phi_1, \cdots, \phi_t\})$ is a polarizable dynamical system of several morphisms if $\phi_i$ are endomorphisms on a projective variety $W$ such that $\bigotimes \phi_i^*L$ is linearly equivalent to $L^{\bigotimes q}$ for some ample line bundle $L$ on $W$ and for some $q>t$. If $q$ is a rational number, then we have the equidistribution of small points of given dynamical system because of Yuan's work \cite{Y}. As its application, we can build a polarizable dynamical system of an automorphism and its inverse on $K3$ surface and show its periodic points are equidistributed.
\end{abstract}

%%%%%%%%%%%%%%%%%%%%%%%%%%%%%%%%%%%%%%%%%%%%%%%%%%%%%%%%%%%%%%%%%%%%%%%%%%%%%%%%
\section{Introduction}

    The study of algebraic dynamics blooms after Northcott proved the arithmetic property of dynamical system of a morphism on projective space. Szpiro, Ullmo \& Zhang \cite{SUZ} started one direction of algebraic dynamics, the equidistribution of small points. After various research of  Bilu \cite{B} on some variety with group structure and of Baker \& Rumely \cite{BR}, Chambert-loir \cite{Ch} and Favre \& Rivera-Letelier \cite{FR} on the equidistribution of dynamical system on dimension $1$, Yuan \cite{Y} proved the general equidistribution theorem: let $\phi$ be a polarizable endomorphisms. Then, we have an ample line bundle $\mathcal{L}$ with semipositive dynamical metric $||\cdot||_\phi$ defined by Zhang \cite{Z1}, then we have the equidistribution of the small points with respect to the height function corresponding to $\overline{\mathcal{L}}= (\mathcal{L},||\cdot||_\phi)$.

    For the dynamical equidistribution, ``polarizable'' condition is very important because it guarantees that we can define a sequence of metric defined on the same line bundle. If $\phi$ is not polarizable, then, metrics ${\phi^k}^*||\cdot||^{\frac{1}{q^k}}$ may be defined on different line bundles for each $k$ so that ``convergence'' of give sequence of metrics doesn't make sense.

    Still, we have hope because of Kawaguchi's idea. He \cite{K0} suggested the polarizable dynamical system of several morphisms:
    \begin{df}
    Let $W$ be a projective variety, let $L$ be an ample line bundle and let $M=\{ \phi_1, \cdots, \phi_t:W \rightarrow W\}$ be a finite set of morphisms. We say that a dynamical system of several morphism $(W, M)$ is \emph{polarizable} if
    \[
    \bigotimes_{i=1}^t \phi_i^*L \sim L^{\otimes q}
    \]
    for some rational number $q>t$.
    \end{df}

    His idea makes a way to study the dynamics of some automorphisms. In general, an automorphisms on projective variety is not polarizable in general. However, we have a good counter part, the inverse map. The existence of the inverse map makes a dynamical system of a automorphism better; all preperiodic points of an automorphisms $\sigma$ is actually periodic, and $\sigma$ and $\sigma^{-1}$ shares that same periodic point. Thus, we can consider a dynamical system of an automorphism $\sigma$ is considered as a dynamical system of $\{\sigma, \sigma^{-1}\}$. Furthermore, a dynamical system of several morphisms $(W, M=\{\phi_1, \cdots \phi_t\})$ actually works with the monoid generated by $M$. If $M$ consists of an automorphism $\sigma$ and its inverse, the monoid $\mathcal{M}$ generated by $M$ is exactly $\{ \sigma^k ~|~ k \in \mathbb{Z}\}$ and hence $\mathcal{M}$-preperiodic points is essentially `$\sigma$-preperiodic or $\sigma^{-1}$-prepriodic points. In Section~2, we have examples of polarizable dynamical systems of an automorphism on $K3$ surfaces.

    The main purpose of this paper is to confirm that we have the dynamical equidistribution for dynamical systems of several morphisms and to apply this result on some automorphisms on $K3$ surface. In section~3, we will combine Kawaguchi's and Yuan's results to prove the equidistribution of small points:
     \begin{thm*}
        Let $W$ be a projective variety of dimension $n$ over a number field $K$, let $\mathcal{L}$ be an ample line bundle and let $M= \{ \phi_1, \cdots, \phi_t\}$ be a finite set of endomorphisms on $W$. Suppose that $(W,M)$ is a polarizable with some integer $q>t$ and $\{ x_m\}$ be a generic and small sequence. Then, the a sequence of probability measure on the Galois orbit of $x_m$ weakly converges to the dynamical measure at every place $v$:
        \[
        \dfrac{1}{\deg x_m} \sum_{y\in \Gamma_{x_m}} \delta_y \rightarrow \mu_{M,v}
        \]
        where $\Gamma_{x_m}$ is the Galois orbit or $x_m$ and $\mu_{M,v} = \dfrac{c_1(\mathcal{L})^n_v}{\deg_L W}$ is the dynamical probability measure of the dynamical system $(W,M)$ on the analytic space $W_{K_v}^{an}$.
    \end{thm*}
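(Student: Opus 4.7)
\medskip
\noindent\textbf{Proof proposal.} The plan is to reduce Theorem~1.2 to Yuan's equidistribution theorem by producing, at each place $v$, a semipositive adelic metric on $\mathcal{L}$ via Kawaguchi's averaging procedure, and then checking that its associated height satisfies the hypotheses of Yuan's theorem.

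First I would construct the dynamical metric. Starting from any smooth semipositive metric $\|\cdot\|_{v,0}$ on $\mathcal{L}$ at a place $v$, use the polarization isomorphism $\bigotimes_{i=1}^t \phi_i^*\mathcal{L}\simeq \mathcal{L}^{\otimes q}$ to define iteratively
\[
\|\cdot\|_{v,k+1}\;:=\;\Bigl(\bigotimes_{i=1}^t \phi_i^*\|\cdot\|_{v,k}\Bigr)^{1/q},
\]
a sequence of continuous metrics on $\mathcal{L}$. Writing $\|\cdot\|_{v,k}=e^{-f_k}\|\cdot\|_{v,0}$, the recursion on potentials reads $f_{k+1}=\tfrac{1}{q}\sum_{i=1}^t f_k\circ\phi_i$, which has operator norm $t/q<1$ on $C^0(W^{\mathrm{an}}_{K_v})$ by the hypothesis $q>t$. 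Hence the sequence converges uniformly to a continuous metric $\|\cdot\|_{M,v}$ which, by construction, satisfies $\bigotimes_i\phi_i^*\|\cdot\|_{M,v}=\|\cdot\|_{M,v}^{\otimes q}$ as metrics on $\mathcal{L}^{\otimes q}$.

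Next I would verify semipositivity and adelic compatibility. Pullback by a morphism, tensor products and rational powers of semipositive metrics are again semipositive, so each $\|\cdot\|_{v,k}$ is semipositive and the uniform limit $\|\cdot\|_{M,v}$ is semipositive in Zhang's sense. At a good (non-archimedean) place $v$, a projective integral model of $(W,\mathcal{L},\phi_1,\dots,\phi_t)$ makes the iteration stationary from step $0$, so $\|\cdot\|_{M,v}$ coincides with the standard model metric; hence the family $\overline{\mathcal{L}}_M=(\mathcal{L},\{\|\cdot\|_{M,v}\}_v)$ is a genuine adelic metrized line bundle. The functional equation of the metrics translates into the canonical-height identity $\sum_i h_{M}(\phi_i(P))=q\,h_M(P)$; in particular $h_M\ge 0$, $h_M$ vanishes on the common (pre)periodic locus, and the normalized arithmetic top self-intersection $h_{\overline{\mathcal{L}}_M}(W)/((n+1)\deg_L W)$ equals $0$, so that the hypothesis that $\{x_m\}$ is $h_M$-small matches Yuan's smallness hypothesis.

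Finally, I would invoke Yuan's equidistribution theorem applied to the semipositive adelic line bundle $\overline{\mathcal{L}}_M$. Yuan's theorem outputs, at every place $v$ and for every generic small sequence $\{x_m\}$, the weak convergence of the Galois-orbit measures to $c_1(\overline{\mathcal{L}}_M)^n_v/\deg_L W$, which is exactly $\mu_{M,v}$; the theorem then follows.

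The main obstacle I anticipate is not the abstract application of Yuan, but checking that the Kawaguchi iterate indeed gives a bona fide semipositive adelic metric: one must bound $f_k-f_M$ uniformly in $v$ so that the limit metric differs from a model metric only at finitely many places, and one must preserve semipositivity under the rational power $(\,\cdot\,)^{1/q}$ when $q$ is not an integer. Both are handled by replacing $M$ with a suitable iterated system $\{\phi_{i_1}\circ\cdots\circ\phi_{i_m}\}$ to clear denominators and to bring the iteration into Yuan's setup, after which the equidistribution conclusion transfers back to the original system because the dynamical metric is unchanged under this reduction.
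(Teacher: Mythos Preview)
Your proposal is correct and follows essentially the same route as the paper: construct the semipositive adelic dynamical metric on $\mathcal{L}$ via Kawaguchi's iteration, then feed it directly into Yuan's equidistribution theorem. The only superfluous step is your assertion that $h_{\overline{\mathcal{L}}_M}(W)=0$, which you neither prove nor need here, since the paper's definition of ``small'' already coincides verbatim with Yuan's hypothesis; the paper establishes that vanishing separately in a later corollary via Zhang's inequality and the Zariski density of periodic points.
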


    In Section~5, we will show that we can find a generic and small sequence of periodic points. Thus, we can find some properties of the set pr periodic points of some automorphisms on $K3$ surfaces.

    \begin{thm**}
        Let $W$ be a projective variety defined over a number field $K$, let $M = \{\sigma, \sigma^{-1}\}$ be an automorphism and its inverse on $W$. Suppose that $(W,M)$ is polarizable with some integer $q>2$. Then, $\Per(\sigma)$ is Zariski dense.
    \end{thm**}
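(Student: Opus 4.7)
The plan is to produce a generic small sequence $\{x_m\}\subset\Per(\sigma)$ of periodic points. Since \emph{generic} means that no subsequence is contained in a proper Zariski closed subset, the mere existence of such a sequence immediately forces $\Per(\sigma)$ to be Zariski dense; feeding it into Theorem~1.2 would moreover yield equidistribution of the Galois orbits against $\mu_{M,v}$ at every place.

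\emph{Smallness} is essentially automatic. Let $\hat h_M$ denote the canonical height associated to the polarization $(W,M,L)$, constructed as the Tate-style limit of the naive height; it is non-negative and satisfies the functional equation
\[
\hat h_M(\sigma(x))+\hat h_M(\sigma^{-1}(x))=q\,\hat h_M(x).
\]
If $x$ is $\sigma$-periodic with orbit $O=\{x_0,\dots,x_{N-1}\}$, then $O$ is automatically $\sigma^{-1}$-stable, so summing the functional equation over $O$ yields $2S=qS$ with $S=\sum_i\hat h_M(x_i)\geq 0$. Since $q>2$, we get $S=0$, hence $\hat h_M(x_i)=0$ for every $i$. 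Thus any sequence of periodic points is small.

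\emph{Genericity} is where the work lies; I would argue by contradiction. Suppose $\Per(\sigma)$ lies in a proper Zariski closed subset, and let $V$ be its Zariski closure, which is $\sigma$-invariant because $\sigma$ permutes periodic points. The relation $\sigma^*L\otimes(\sigma^{-1})^*L\sim L^{\otimes q}$ forces $L$ to satisfy $(\sigma^*)^2 L - q\sigma^*L + L = 0$ in $\Pic(W)_{\mathbb R}$, so $\sigma^*$ has an eigenvalue $\lambda=(q+\sqrt{q^2-4})/2>1$ on the span of $L$. I would then bound $\#\mathrm{Fix}(\sigma^n)$ below by the intersection of the graph $\Gamma_{\sigma^n}\subset W\times W$ with the diagonal $\Delta$, computed against the product polarization by $L$; the ampleness of $L$ together with the exponential growth of $(\sigma^n)^*L$ at rate $\lambda^n$ yields an exponential lower bound. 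The analogous count on $V$ is governed by the restricted dynamical degree of $\sigma|_V$, which is strictly smaller because $V$ is $\sigma$-invariant of strictly smaller dimension. For $n$ large this produces a periodic point outside $V$, contradicting the choice of $V$. Iterating against a countable exhausting family of proper closed subsets extracts the desired generic sequence.

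The main obstacle will be making the fixed-point comparison rigorous: on $W$ one needs a genuine lower bound on isolated fixed points rather than a signed Lefschetz count, and on $V$ one must prove that every $\sigma$-invariant proper subvariety has strictly smaller dynamical degree than $W$. This likely requires passing to an equivariant resolution or handling each component of a $\sigma$-orbit within $V$ separately, together with positivity estimates for intersection numbers of nef classes pulled back by $\sigma^n$. For the K3 application of interest, these difficulties dissolve because dynamical degrees are read off from the action of $\sigma^*$ on $\Pic(W)$ and Lefschetz on the middle cohomology gives the count precisely, which is presumably how Section~5 concludes.
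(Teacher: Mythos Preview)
Your smallness argument is correct and matches what the paper does (in the corollary following the theorem). For density, however, you take a genuinely different and much heavier route than the paper, and the obstacles you flag are real gaps in your general statement.

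The paper's proof uses no fixed-point counting and no dynamical-degree comparison at all. It is a three-line intersection-number argument. Assume the Zariski closure of $\Per(\sigma)$ is proper; since $\sigma$ permutes its irreducible components, some component $C$ is fixed by $\sigma^m$ for some $m>0$. (The paper works on a surface, so $C$ is a curve.) By the projection formula, $(\sigma^{\pm m})^*L\cdot C = L\cdot(\sigma^{\pm m})_*C = L\cdot C$, hence
\[
2\,(L\cdot C)=\bigl((\sigma^m)^*L+(\sigma^{-m})^*L\bigr)\cdot C = q_m\,(L\cdot C),
\]
where $q_m>2$ by the preceding lemma showing $(W,\{\sigma^m,\sigma^{-m}\})$ is again polarizable. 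Thus $L\cdot C=0$, contradicting ampleness of $L$. That is the whole proof.

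What this buys over your approach: the ``strictly smaller dynamical degree on $V$'' step you identify as the main obstacle is simply bypassed. The polarization relation, being \emph{linear} in $L$, interacts directly with the linear functional $L\mapsto L\cdot C$, and the projection formula turns $\sigma^m$-invariance of $C$ into the trivial identity $2=q_m$. You never need to produce a single fixed point, let alone count them. Your Lefschetz route could in principle be completed on a K3 surface (as you note, the holomorphic Lefschetz formula there gives exact counts), but it is substantially more work, and for the general projective-variety statement your lower bound on \emph{isolated} fixed points and your upper bound on $\#\mathrm{Fix}(\sigma^n|_V)$ are not justified as written.
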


\par\noindent\emph{Acknowledgements}.\enspace
 The author would like to thank Xinyi Yuan for helpful discussions and comments for paper, thank Joseph H. Silverman for suggesting ideal for proof of Theorem~\ref{dense}. Also thanks to Shu Kawaguchi and Jordan Ellenberg for useful comments.

\section{Polarizable dynamical systems of automorphisms on $K3$ surfaces}

    We have lots of interesting examples of the polarizable dynamical system of several morphisms on $K3$ surface.

    \subsection{$K3$ surface with two involutions, I}
    The space of $K3$ surfaces is a $19$-dimensional object up to isomorphism. And, a family of $K3$ surface in $\pp^2\times \pp^2$ defined by an intersection of hypersurfaces of bidegree $(1,1)$ and $(2,2)$ is $18$-parameter family of isomorphism classes of nonsingular surfaces. For details of such $K3$ surfaces, refer \cite[\S 7.4]{S2}.

    \begin{ex}\label{K3I}
        Let $S =\pp^2\times \pp^2$ be a $K3$ surface defined by an intersection of hypersurfaces of bidegree $(1,1)$ and $(2,2)$ with two involutions $\iota_1, \iota_2$, let $\pi_i$ be the projection map onto $i$-th component and let $L_i = \pi_i^*\ox_{\pp^2}(1)$.
        Then, we have
        \[
        \iota_i^*L_i = L_i,  \quad \iota_i^*L_j = L_i^{\otimes 4} \otimes L_j^{\otimes-1}
        \]
        and hence
        \[
        \iota_1^*L \otimes \iota_2^*L = L^{\otimes 4}
        \]
        where $L =L_1 \otimes L_2$ is an ample line bundle. Therefore, $(S, \{ \iota_1, \iota_2 \})$ is a polarizable dynamical system.
    \end{ex}

    \begin{ex}\label{K3IB}
        Let $S$ be the $K3$ surface defined on Example~\ref{K3I}. Define $\sigma_1 = \iota_2 \circ \iota_1$ and $\sigma_2 = \iota_1 \circ \iota_2 = \sigma_1^{-1}$. Then,
        $(S, \{\sigma_1, \sigma_2\})$ is a polarizable dynamical system: we have
         \[
        \sigma_i^*L_i = L_i^{\otimes -1} \otimes L_j^{\otimes 4}, \quad  \sigma_i^*L_j = L_i^{\otimes -4} \otimes L_j^{\otimes 15}
        \]
        and
        \[
        \sigma_1^*L \otimes \sigma_2^*L = L^{\otimes 14}.
        \]
    \end{ex}

    \subsection{$K3$ surface with two involutions, II}

    There is another way of defining other automorphisms on $K3$ surface in $\pp^2 \times \pp^2$. With same method, we can calculate the
    number of parameters of defining equations
     \[
     \sum_{0\geq i \geq j \geq 2}\sum_{0\geq k \geq 2} A_{ijk}  x_{i} x_j y_k  \quad \sum_{0\geq l \geq 2}\sum_{0\geq m \geq n \geq 2}
     B_{mmn}  x_{l} y_m y_n
     \]
     is $(18-1)+(18-1)$ and the dimension of $\PGL_3$, the isometry group of each $\pp^2$, is 8. Hence the dimension of the family of $K3$ such surfaces is $18$ again.

    \begin{ex}\label{K3II}
        Let $S =\pp^2 \times \pp^2$ be a $K3$ surface generated by intersecting two hypersurfaces of bidegree $(1,2)$ and $(2,1)$ with two involutions $\iota_1, \iota_2$. Let $\pi_i$ be the projection map onto $i$-th component and $L_i = \pi_i^*\ox_{\pp^2}(1)$.
        Then, we have
        \[
        \iota_i^*L_i = L_i, \quad \iota_i^*L_j = L_i^{\otimes -1} \otimes L_j^{\otimes 5}.
        \]
        Since $L =L_1 \otimes L_2$ is ample, we get
        \[
        \iota_1^*L \otimes \iota_2^*L = L^{\otimes 5}
        \]
        and hence get a polarizable dynamical system.
    \end{ex}

    \begin{ex}\label{K3IIB}
         Let $S$ be the $K3$ surface defined on Example~\ref{K3II}. Define $\sigma_1 = \iota_2 \circ \iota_1$ and $\sigma_2 = \iota_1 \circ \iota_2=\sigma_1^{-1}$. Then,
        $(S, \sigma_1, \sigma_2)$ is a rational polarizable dynamical system: we have
         \[
        \sigma_i^*L_i = L_i^{\otimes -1} \otimes L_j^{\otimes 5}, \quad  \sigma_i^*L_j = L_i^{\otimes -5} \otimes L_j^{\otimes 24}
        \]
        and
        \[
        \sigma_1^*L \otimes \sigma_2^*L  = L^{\otimes 23}
        \]
        for all $L \in \langle L_1, L_2 \rangle$.
    \end{ex}

    \subsection{$K3$ surface with three involutions}

    If we define a $K3$ surface in $\pp_1 \times \pp^1 \times \pp^1$, then N\'{e}ron-Severi group is of rank $3$. Thus we expect that the dimension of the family of $K3$ surface is reduced by $1$; the
    number of parameters for defining equations
     \[
     \sum_{0\geq i \geq j \geq 2}\sum_{0\geq k \geq l\geq 2}\sum_{0\geq m \geq n \geq 2} A_{ijklmmn}I x_{i}xji y_k y_l z_m z_n
     \]
     is $27-1$ and the dimension of $\PGL_2$, the isometry group of each $\pp^1$, is 3. Hence
      the dimension of the family of $K3$ such surfaces is $26 - 3 - 3 - 3 = 17$.
    
    \begin{ex}\label{K3III}
        Let $S = \pp^1 \times \pp^1 \times \pp^1$ be $K3$ surface, a hypersurface of bidegree $(2,2,2)$ with three involutions $\iota_1, \iota_2, \iota_3$. Let $\pi_i$ be the projection map onto $i$-th component and $L_i = \pi_i^*\ox_{\pp^1}(1)$. Then, we have
        \[
        \iota_i^*L_j = L_j~\text{for}~i\neq j, \quad \iota_i^*L_i = L_i^{\otimes -1} \otimes  L_j^{\otimes2} \otimes L_k^{\otimes2}.
        \]
        Hence, let $L =L_1 \otimes L_2 \otimes L_3$ and get
        \[
        \iota_1^*L \otimes \iota_2^*L \otimes \iota_3^*L = L^{\otimes 5}.
        \]
        So, $(S, \{\iota_1, \iota_2, \iota_3\})$ is polarizable.
    \end{ex}

    \begin{ex}\label{K3IIIB}
         Let $S$ be the $K3$ surface defined on Example~\ref{K3III}. Consider $\tau_1 = \iota_3\circ \iota_2 \circ \iota_1$, $\tau_2 = \tau_1^{-1}$. Then,
        \[
        \begin{array}{c@{~=~}l@{~=~}l@{~=~}l}
        \tau_1^*L_1 & \iota_3^* \iota_2 (L_1^{\otimes -1} \otimes  L_2^{\otimes 2} \otimes L_3^{\otimes 2}) &
        \iota_3^*(L_1^{\otimes 3} \otimes  L_2^{\otimes -2} \otimes L_3^{\otimes 6}) &
        L_1^{\otimes 15} \otimes  L_2^{\otimes 10} \otimes L_3^{\otimes -6} \\
        \tau_1^*L_2 & \iota_3^*\iota_2^* (L_2) &  \iota_3^*(L_1^{\otimes 2} \otimes  L_2^{\otimes -1} \otimes L_3^{\otimes 2}) &
        L_1^{\otimes 6} \otimes  L_2^{\otimes 3} \otimes L_3^{\otimes -2} \\
        \tau_1^*L_3 & \iota_3^*\iota_2^* (L_3) & \iota_3^* L_3 & L_1^{\otimes 2} \otimes  L_2^{\otimes 2} \otimes L_3^{\otimes -1}
        \end{array}
        \]
        \[
        \begin{array}{c@{~=~}l@{~=~}l@{~=~}l}
        \tau_2^*L_1 & \iota_1^*\iota_2^* (L_1) & \iota_1^* L_1 & L_1^{\otimes -1} \otimes  L_2^{\otimes 2} \otimes L_3^{\otimes 2}\\
        \tau_2^*L_2 & \iota_1^*\iota_2^* (L_2) &  \iota_1^*(L_1^{\otimes 2} \otimes  L_2^{\otimes -1} \otimes L_3^{\otimes 2}) &
        L_1^{\otimes -2} \otimes  L_2^{\otimes 3} \otimes L_3^{\otimes 6} \\
        \tau_2^*L_3 & \iota_1^* \iota_2 (L_1^{\otimes 2} \otimes  L_2^{\otimes 2} \otimes L_3^{\otimes -1}) &
        \iota_1^*(L_1^{\otimes 3} \otimes  L_2^{\otimes -2} \otimes L_3^{\otimes 6}) & L_1^{\otimes -6} \otimes  L_2^{\otimes 10} \otimes L_3^{\otimes 15} \\
        \end{array}
        \]

        \begin{eqnarray*}
        [\tau_1^*\otimes \tau_2^*](L_1^{\otimes a} \otimes  L_2^{\otimes b} \otimes L_3^{\otimes c})
         &=& (L_1^{\otimes 15a+6b+2c} \otimes  L_2^{\otimes 10a+3b+2c} \otimes L_3^{\otimes -6a-2b-c}) \\
         & & \otimes (L_1^{\otimes -a-2b-6c} \otimes  L_2^{\otimes  2a+3b+ 10c } \otimes L_3^{\otimes 2a+6b+15c}) \\
         &=&          L_1^{\otimes 14a+4b-4c} \otimes  L_2^{\otimes 12a+6b+12c} \otimes L_3^{\otimes -4a+4b+14c}
        \end{eqnarray*}
        Therefore, let $L = L_1 \otimes L_2^{\otimes 2} \otimes L_3$. Then,
        \[
        \tau_1^*L \otimes \tau_2^*L \sim L^{\otimes 18}
        \]
        and hence
        $(S, \tau_1, \tau_1^{-1})$ is polarizable. More precisely, let $L_{\alpha, \beta} = (L_1 \otimes L_2)^{\otimes \alpha} \otimes (L_1^{-1} \otimes L_3)^{\otimes \beta}$, then
        \[
        \tau_1^*L_{\alpha, \beta} \otimes \tau_2^*L_{\alpha, \beta} \sim L_{\alpha, \beta}^{\otimes 18}.
        \]

        Similarly, automorphisms $\tau' = \iota_1\circ \iota_3 \circ \iota_2$, $\tau''= \iota_2 \circ \iota_1 \circ \iota_3$ with their inverses will generate polarizable dynamical systems respectively.
    \end{ex}

     \begin{ex}\label{ce}
    Consider the following case; let $S$ be a $K3$ surface defined on Example~\ref{K3III}. Define $\sigma_1 = \iota_2 \circ \iota_1$ and $\sigma_2 = \iota_1 \circ \iota_2 = \sigma_1^{-1}$. Then,
        $(S, \sigma_1, \sigma_2)$ is a polarizable dynamical system:
        \[
        \begin{array}{c@{~=~}l@{~=~}l}
        \sigma_1^*L_1 & \iota_2^* (L_1^{\otimes -1} \otimes  L_2^{\otimes 2} \otimes L_3^{\otimes 2}) &
        L_1^{\otimes 3} \otimes  L_2^{\otimes -2} \otimes L_3^{\otimes 6} \\
        \sigma_1^*L_2 & \iota_2^* (L_2) &  L_1^{\otimes 2} \otimes  L_2^{\otimes -1} \otimes L_3^{\otimes 2} \\
        \sigma_1^*L_3 & \iota_2^* (L_3) & L_3
        \end{array}
        \]
        Therefore,
        \begin{eqnarray*}
        [\sigma_1^*\otimes \sigma_2^*](L_1^{\otimes a} \otimes  L_2^{\otimes b} \otimes L_3^{\otimes c})
         &=& (L_1^{\otimes 3a+2b} \otimes  L_2^{\otimes -2a-b} \otimes L_3^{\otimes 6a+2b+c}) \\
         & & \otimes (L_1^{\otimes -a-2b} \otimes  L_2^{\otimes 2a+3b } \otimes L_3^{\otimes 2a+6b+c}) \\
         &=&          L_1^{\otimes 2a} \otimes  L_2^{\otimes 2b} \otimes L_3^{\otimes 8a+8b+2c}
        \end{eqnarray*}
        Therefore, $L_3$ is the only combination of $L_1, L_2$ and $L_3$ which makes linear equivalence;
        \[
        \sigma_1^*L_3 \otimes \sigma_2^* L_3 \sim L_3^{\otimes 2}
        \]
        and hence $(S, \{\sigma_1, \sigma_2\})$ is not a polarizable dynamical system in Kawaguchi's sense.
        Similarly,
        $(S, \{\iota_1\circ \iota_3, \iota_3\circ \iota_1\})$,  $(S, \{\iota_2\circ \iota_3, \iota_3\circ \iota_2\})$
        are not polarizable.
    \end{ex}

    \subsection{$K3$ surface with three involutions, of the Picard number $4$.}
    \begin{ex}
    Let $S = \pp^1 \times \pp^1 \times \pp^1$ be $K3$ surface, a hypersurface of bidegree $(2,2,2)$ of the Picard number $4$. Then, we have another involution $\iota_4$ of order $2$, which is a group inverse at each elliptic curve fibers of $S$. Then, $\Pic(S) = \langle L_1, L_2, L_3, L_4 \rangle$ where $L_4$ corresponds to $-2$-curve class containing $(x,0,0)$. (See \cite{BM} for details.) Define an automorphisms $\tau = \iota_1 \circ \iota_2 \circ \iota_4$ and $\tau^{-1} = \iota_4 \circ \iota_2 \circ \iota_1$. Then,
    \[
    \iota_4^*L_1 = L_1, \iota_4^*L_4 = L_4, \quad  \iota_4^*L_j = L_1^{\otimes 8} \otimes L_i^{-1} \otimes L_4.
    \]
    Therefore,
    \[
    \tau^*L \otimes {\tau^{-1}}^* L = L^{\otimes 30}
    \]
    if
    \[L =  (L_1^{\otimes 4} \otimes  L_2^{\otimes  4} \otimes L_3^{\otimes 3})^{\otimes \alpha} \otimes
     (L_1^{\otimes 8} \otimes  L_2^{\otimes  2} \otimes L_4^{\otimes 3})^{\otimes \beta}
     \]
     for some $\alpha, \beta$. Similarly, $\tau' = \iota_1 \circ \iota_3 \circ \iota_4$ with its inverse
     will generate a polarizable dynamical system. But, $\zeta = \iota_2 \circ \iota_3 \circ \iota_4$ or $\zeta' = \iota_3 \circ \iota_2 \circ \iota_4$ only polarized by $q=2$ or $q=-2$. Also, $\eta = \iota_1 \circ \iota_4\circ \iota_2$, $\eta' = \iota_1 \circ \iota_4 \circ \iota_2$ is not polarized by any ample line bundles.
    \end{ex}

\section{Dynamical Equidistribution on polarizable dynamical systems of several morphisms}

    The equidistribution of small points for polarizable morphisms is almost proved. Kawaguchi proved that the dynamical system of several morphisms generates the dynamical adelic metric which is semipositive.  Thus, the equidistribution of small points of a polarizable dynamical system is an easy consequence of Yuan's results. In this section, we will briefly check Kawaguchi's and Yuan's results to confirm the equidistribution theory for dynamical system of several morphisms.

    \begin{df}
        Let $W$ be a projective variety defined over a number field $K$, let $\overline{L} = (L,||\cdot||)$ be an adelic ample line bundle  on $W$, an ample line bundle $L$ with a semipositive adelic metric$||\cdot||$. Then we define \emph{a height of subvarieties corresponding to $\overline{L}$} to be
        \[
        {h}_{\overline{L}} (Y) := \dfrac{c_1({\overline{L}})^{d+1}}{(d+1)\ord_{\overline{L}} Y},
        \]
        where $Y$ is a subvariety of $W$ of dimension $d$ and $c_1$ is the curvature form.
    \end{df}

    \begin{df}
        Let $W$ be a projective variety, let $\overline{L}$ be an adelic ample line bundle and let ${h}_{\overline{L}}$ be the height function for closed subvarieties of $W$ corresponding line bundle $\overline{L}$. Suppose $\{x_m\}$ is a sequence of points. Then we say
        \emph{$\{x_m\}$ is generic} if any infinite subsequence is not contained in a closed subvariety. We say\emph{$\{x_m\}$ is small}
        if ${h}_{\overline{L}}(x_m)$ converges to ${h}_{\overline{L}}(W)$.
    \end{df}

    \begin{thm}[{\cite[Theorem 3.2]{Y}}]\label{yuan}
        Suppose that $W$ is a projective variety of dimension $n$ over a number field $K$, and $\overline{L}$ is a metrized line bundle over $W$with semipositive adelic metric. Let $\{x_m\}$ be an infinite sequence of closed point in $W$ which is generic and small with respect to $h_{\overline{L}}$. Then for any place $v$ of $K$, the Galois orbit of sequence $\{x_m\}$ are equidistributed in the analytic space $W^{an}_{K_{v}}$ with respect to the canonical measure $d\mu_{v} = c_1(\overline{L})^n_v / \deg_{\overline{L}} W$:
        \[
        \dfrac{1}{\deg x_m} \sum_{y \in \Gamma x_m} \delta_y ~\text{weakly converges to}~ d\mu_{v}.
        \]
    \end{thm}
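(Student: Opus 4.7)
The plan is to follow the variational principle of Szpiro--Ullmo--Zhang, combined with Yuan's arithmetic Hilbert--Samuel (Siu-type) inequality for big metrized line bundles. First, by a standard reduction, the desired weak convergence
\[
\dfrac{1}{\deg x_m}\sum_{y \in \Gamma x_m} \delta_y \longrightarrow d\mu_v
\]
is equivalent to showing, for every real-valued continuous $f\colon W^{an}_{K_v} \to \mathbb{R}$,
\[
\dfrac{1}{\deg x_m}\sum_{y \in \Gamma x_m} f(y) \longrightarrow \int_{W^{an}_{K_v}} f\, d\mu_v,
\]
and by density it is enough to treat model functions (smooth in the archimedean case, functions coming from formal models in the nonarchimedean case).

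Next, for such an $f$ and a small real parameter $\epsilon$, I would form the perturbed adelic metrized line bundle $\overline{L}(\epsilon f)$, whose metric at $v$ is $\|\cdot\|_v e^{-\epsilon f}$ and whose metrics at the other places agree with those of $\overline{L}$. Linearity of the height in the metric gives
\[
h_{\overline{L}(\epsilon f)}(x_m) \;=\; h_{\overline{L}}(x_m) + \dfrac{\epsilon}{\deg x_m}\sum_{y \in \Gamma x_m} f(y),
\]
while multilinearity of the top arithmetic intersection yields the first-order expansion
\[
c_1(\overline{L}(\epsilon f))^{n+1} \;=\; c_1(\overline{L})^{n+1} + (n+1)\epsilon \int f\, c_1(\overline{L})^n_v + O(\epsilon^2).
\]

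The decisive step is the lower bound
\[
\liminf_m h_{\overline{L}(\epsilon f)}(x_m) \;\geq\; \dfrac{c_1(\overline{L}(\epsilon f))^{n+1}}{(n+1)\deg_L W},
\]
which must hold even when the perturbed metric is no longer semipositive; this is precisely Yuan's arithmetic analogue of Siu's inequality applied to the big line bundle $\overline{L}(\epsilon f)$. Combining this inequality with the two displays above, and using smallness of $\{x_m\}$ so that $h_{\overline{L}}(x_m)$ tends to $c_1(\overline{L})^{n+1}/((n+1)\deg_L W)$, dividing through by $\epsilon>0$ and letting $\epsilon\to 0^+$ produces
\[
\liminf_m \dfrac{1}{\deg x_m}\sum_{y \in \Gamma x_m} f(y) \;\geq\; \int f\, d\mu_v;
\]
applying the same argument to $-f$ in place of $f$ yields the reverse $\limsup$ inequality, and hence the desired equality.

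The main obstacle is the Zhang-type lower bound above: for a generic continuous $f$ and small $\epsilon$, the perturbed metrized line bundle $\overline{L}(\epsilon f)$ is only big, not semipositive, so the classical Gillet--Soul\'e arithmetic Hilbert--Samuel theorem and Zhang's original inequality do not apply directly. Overcoming this obstruction --- by proving an arithmetic analogue of Siu's inequality that extends the Hilbert--Samuel formula from ample to big metrized line bundles --- is the technical heart of Yuan's paper, and it is precisely this ingredient that I would have to import wholesale in order to close the argument.
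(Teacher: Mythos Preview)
The paper does not supply its own proof of this statement: Theorem~\ref{yuan} is quoted verbatim as \cite[Theorem~3.2]{Y} and used as a black box, with only a remark on why $L$ should be taken in $\Pic(W)\otimes\mathbb{Q}$. Your proposal therefore cannot be compared to ``the paper's own proof'' because there is none here; what you have written is a faithful outline of Yuan's original argument in \cite{Y}: the Szpiro--Ullmo--Zhang variational method, reduction to model/smooth test functions, the perturbation $\overline{L}(\epsilon f)$, the first-order expansion of the arithmetic self-intersection, and the key lower bound coming from Yuan's arithmetic bigness (Siu-type) inequality. As a sketch of that external result your summary is accurate, and you correctly identify the arithmetic Siu inequality as the one nontrivial ingredient that must be imported.
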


    \begin{rem}
        In Theorem~\ref{yuan}, we should assume that $L$ is $\mathbb{Q}$-divisor. Actually we will use the integral model $(\mathcal{W}, \mathcal{L})$ of $(W,L^e)$ where  $\mathcal{L}$ on the generic fiber $\mathcal{W}_{\mathbb{Q}}$ is $L^e$. Thus, $L \in \Pic(W)\otimes \mathbb{Q}$. Using $\mathcal{L}$ instead of $L$ implies this fact.
    \end{rem}

    \begin{thm}[{\cite[Theorem A,B]{K0}}]\label{kawa}
        Let $W$ be a projective variety defined over a number field $K$, Let $L$ be an ample line bundle on $W$, and let $M = \{\phi_1, \cdots \phi_t\}$ be a set of endomorphisms on $W$. Suppose that $(W,M)$ is polarizable with respect to $L$:
        \[
        \bigotimes_{i=1}^t \phi_i^*L = L^{\otimes q}
        \]
        where $q>t$. Then,
            \begin{enumerate}
                \item There is a unique continuous metric $||\cdot ||_M$, called the admissible metric on $L$ with $||\cdot ||^q_M = \tau^*(\phi_1^*|| \cdot ||^q_M \cdots \phi_t^* ||\cdot ||^q_M)$ where $\tau : L^{\otimes q} \rightarrow \bigotimes \phi_i^*L$ is an isomorphism.
                \item Let $\overline{L}= (L, ||\cdot||_M)$ be the line bundle with admissible metric. Then, there exists a unique real-valued function
                    \[
                        \widehat{h}_{\overline{L}} : W(\overline{K}) \rightarrow \mathbb{R}
                    \]
                with the following properties:
                    \begin{enumerate}
                        \item $\widehat{h}_{\overline{L}}$ is a Weil height corresponding to $L$.
                        \item $\displaystyle \sum_{i=1}^k \widehat{h}_{\overline{L}} \bigl( \phi_i(x)\bigr) = q \cdot \widehat{h}_{\overline{L}} (x)$ for all $x \in W(\overline{K})$.
                    \end{enumerate}
                \item $\widehat{h}_{\overline{L}} \geq 0$ for all $x\in W(\overline{K})$.
            \end{enumerate}
    \end{thm}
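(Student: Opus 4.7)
The plan is to derive parts (1), (2), and (3) by a common contraction-mapping principle underwritten by the inequality $q>t$, which supplies a contraction factor $t/q<1$ at every stage.

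For part (1), I would work one place $v$ at a time. Fix any reference continuous metric $\|\cdot\|_0$ on $L$ and write an arbitrary continuous metric as $\|\cdot\|=e^{-f}\|\cdot\|_0$, identifying continuous metrics on $L$ with elements $f\in C^0(W^{\mathrm{an}}_{K_v})$. The admissibility condition translates into the affine fixed-point equation
$$f = \frac{1}{q}\sum_{i=1}^t \phi_i^* f + c_0,$$
where $c_0\in C^0(W^{\mathrm{an}}_{K_v})$ encodes the discrepancy between $\|\cdot\|_0^{\otimes q}$ and $\tau^*\bigl(\bigotimes_i \phi_i^*\|\cdot\|_0\bigr)$ under the isomorphism $\tau$. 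The operator $\Psi(f)=\frac{1}{q}\sum_i \phi_i^*f + c_0$ satisfies $\|\Psi(f)-\Psi(g)\|_\infty \le (t/q)\|f-g\|_\infty$ and is therefore a strict contraction on the Banach space $C^0(W^{\mathrm{an}}_{K_v})$. Banach's fixed-point theorem supplies a unique continuous $f_\infty$, hence a unique admissible metric $\|\cdot\|_M=e^{-f_\infty}\|\cdot\|_0$; iterating from $\|\cdot\|_0$ identifies $\|\cdot\|_M$ with the uniform limit of the metrics obtained by taking $q$-th roots of $\tau^*\bigl(\bigotimes_i \phi_i^*\|\cdot\|_n^q\bigr)$.

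For part (2), I would run a Tate-style limit. Fix any Weil height $h_L$ corresponding to $L$; the polarization relation gives $\sum_i h_L(\phi_i(x)) = q\,h_L(x) + O(1)$ with a bounded error $\varepsilon(x)$. Setting $h_n(x) = q^{-n}\sum_{w\in M^n} h_L\bigl(w(x)\bigr)$, where the sum ranges over all length-$n$ compositions $w=\phi_{i_1}\circ\cdots\circ\phi_{i_n}$, one checks that $\|h_{n+1}-h_n\|_\infty \le (t/q)^n\|\varepsilon\|_\infty/q$. The resulting geometric-series estimate makes $(h_n)$ uniformly Cauchy, so $h_n$ converges uniformly to a function $\widehat{h}_{\overline{L}}$ differing from $h_L$ by $O(1)$ (which gives (2a)) and satisfying the functional equation (2b) by passage to the limit. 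Uniqueness is automatic: any bounded $d$ with $\sum_i d(\phi_i(x)) = q\,d(x)$ obeys $\|d\|_\infty \le (t/q)\|d\|_\infty$, forcing $d\equiv 0$. Identifying $\widehat{h}_{\overline{L}}$ with the adelic height attached to the metric of (1) then amounts to observing that both $-\log\|s(P)\|_{M,v}$ and the $v$-component of $\widehat{h}_{\overline{L}}(P)$ solve the same affine fixed-point equation for local heights.

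For part (3), choose the representative Weil height $h_L$ so that $h_L \ge -C$ on $W(\overline{K})$, which is possible because $L$ is ample. Since $\widehat{h}_{\overline{L}}-h_L$ is bounded, there is a global constant $C'$ with $\widehat{h}_{\overline{L}} \ge -C'$. Iterating the functional equation yields
$$q^n\,\widehat{h}_{\overline{L}}(x) \;=\; \sum_{w\in M^n} \widehat{h}_{\overline{L}}\bigl(w(x)\bigr) \;\ge\; -C'\,t^n,$$
so $\widehat{h}_{\overline{L}}(x)\ge -C'(t/q)^n \to 0$ as $n\to\infty$. The only real technical obstacle is the bookkeeping that matches the metric of (1), place-by-place and compatibly across all $v$, with the canonical height of (2); once the contraction factor $t/q<1$ is in hand, every remaining estimate reduces to a uniform geometric-series bound.
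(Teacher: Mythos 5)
Your proposal is correct, and it is essentially the standard argument of the cited source: the place-by-place contraction (Banach fixed point with factor $t/q<1$) for the admissible metric, the Tate-style limit $q^{-n}\sum_{w\in M^n}h_L(w(x))$ with the geometric estimate $(t/q)^n$ for existence, uniqueness and the functional equation of the canonical height, and the iteration $q^n\widehat{h}(x)\ge -C't^n$ for nonnegativity are exactly how Kawaguchi proves Theorems A and B in \cite{K0}, which this paper quotes without reproducing a proof. The only points left implicit --- that the local fixed-point metrics patch into an adelic metric and that the resulting height coincides with $\widehat{h}_{\overline{L}}$ --- are routine bookkeeping, as you note.
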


    \begin{rem}
        The condition $q>t$ is necessary because the number of $N$-combinations of $\phi_i$'s in $M$ is $t^N$ while the growth rate of height
        is $q^N$. More precisely, the canonical height, if exists, defined by several morphism is of the form
        \[
        \lim_{N\rightarrow \infty} \dfrac{1}{q^N} \sum_{F \in \mathcal{M}_N} h_{\overline{L}} \bigl( F(P) \bigr).
        \]
        Therefore, if $q\leq k$, then it may not shrink at prepriodic points. For example, if $q=k$ and $P$ is a common fixed point of $M$ with nontrivial height value, then
        \[
        \lim_{N\rightarrow \infty} \dfrac{1}{q^N} \sum_{F \in \mathcal{M}_N} h_{\overline{L}}\bigl( F(P) \bigr) = h_{\overline{L}}(P)
        \]
        so that $P$ may not be a root of the canonical height. Thus, even we can build a semipositive metric, it is not compatible with the original dynamical system.
    \end{rem}

    Now, combining previous two Lemmas, we get equidistribution of small points for the polarizable dynamical system of several morphisms:

     \begin{thm*}
        Let $W$ be a projective variety of dimension $n$ over a number field $K$, let $L$ be an ample line bundle and let $M= \{ \phi_1, \cdots, \phi_t\}$ be a finite set of endomorphisms on $W$. Suppose that $(W,M)$ is a polarizable with some integer $q>t$ and $\{ x_m\}$ be a generic and small sequence. Then, the a sequence of probability measure on the Galois orbit of $x_m$ weakly converges to the dynamical measure at every place $v$:
        \[
        \dfrac{1}{\deg x_m} \sum_{y\in \Gamma_{x_m}} \delta_y \rightarrow \mu_{M,v}
        \]
        where $\Gamma_{x_m}$ is the Galois orbit of $x_m$ and $\mu_{M,v} = \dfrac{c_1(L)^n_v}{\deg_L W}$ is the probability $M$-invariant measure on the analytic space $W_{K_v}^{an}$.
    \end{thm*}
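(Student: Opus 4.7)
The plan is to combine Theorem~\ref{kawa} with Theorem~\ref{yuan} directly. Kawaguchi's theorem supplies an admissible metric $||\cdot||_M$ on $L$ together with a canonical height $\widehat{h}_{\overline{L}}$, while Yuan's theorem yields equidistribution for any semipositive adelic metric on an ample line bundle. The only gap to bridge is verifying that $||\cdot||_M$ is semipositive, so that the hypothesis of Yuan's theorem is met.

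First I would invoke Theorem~\ref{kawa} to produce $||\cdot||_M$ satisfying
\[
||\cdot||_M^q = \tau^*\bigl(\phi_1^*||\cdot||_M^q \otimes \cdots \otimes \phi_t^*||\cdot||_M^q\bigr)
\]
and the corresponding height $\widehat{h}_{\overline{L}}$, where $\overline{L} = (L, ||\cdot||_M)$. The strict inequality $q>t$ is exactly what makes the defining operator a contraction, which guarantees uniform convergence at every place from any reasonable starting metric. To check semipositivity, I would start with a smooth semipositive adelic metric $||\cdot||_0$ on $L$ (available since $L$ is ample) and iterate
\[
||\cdot||_{n+1}^q = \tau^*\bigl(\phi_1^*||\cdot||_n^q \otimes \cdots \otimes \phi_t^*||\cdot||_n^q\bigr).
\]
Each $||\cdot||_n$ remains semipositive, because pullback along a morphism, tensor product, and extraction of positive rational powers all preserve semipositivity both archimedean and, via integral models, non-archimedean. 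Kawaguchi's contraction estimate forces $||\cdot||_n \to ||\cdot||_M$ uniformly at every place, and uniform limits of semipositive adelic metrics are themselves semipositive. Hence $\overline{L}$ is a semipositive adelic ample line bundle in Yuan's sense.

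Once semipositivity is secured, I would apply Theorem~\ref{yuan} to $\overline{L}$ and the given sequence $\{x_m\}$. The polarization relation forces $h_{\overline{L}}(W) = 0$ (apply the functional equation to the top self-intersection and iterate), so the smallness condition in the hypothesis agrees with the smallness condition of Theorem~\ref{yuan}, and the canonical height $\widehat{h}_{\overline{L}}$ coincides with the height of points used there. The conclusion reads
\[
\frac{1}{\deg x_m}\sum_{y\in\Gamma_{x_m}}\delta_y \longrightarrow \frac{c_1(\overline{L})^n_v}{\deg_L W} = \mu_{M,v}
\]
at every place $v$, as required. The main technical obstacle is the semipositivity check at non-archimedean places: one must confirm that the iterates $||\cdot||_n$ genuinely arise from semipositive integral models, so that the uniform limit qualifies as semipositive in Yuan's adelic formalism. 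With Kawaguchi's iterative construction recast in that framework, the rest is a clean combination of the two quoted theorems.
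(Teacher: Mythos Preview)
Your proposal is correct and follows essentially the same route as the paper: invoke Theorem~\ref{kawa} to produce the admissible metric, argue semipositivity as a uniform limit of semipositive metrics, then feed the resulting $\overline{L}$ into Theorem~\ref{yuan}. Your semipositivity verification is considerably more detailed than the paper's one-line assertion (``semipositive because it is defined by the limit of positive metrics''), but the underlying idea is identical.

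One small remark: your parenthetical claim that the polarization relation forces $h_{\overline{L}}(W)=0$ by ``applying the functional equation to the top self-intersection and iterating'' is not as immediate as you suggest, since expanding $\bigl(\sum_i \phi_i^* c_1(\overline{L})\bigr)^{n+1}$ produces cross terms that do not obviously collapse. More to the point, this step is unnecessary: the smallness hypothesis in the theorem is already stated as $h_{\overline{L}}(x_m)\to h_{\overline{L}}(W)$, which is exactly Yuan's hypothesis, so no computation of $h_{\overline{L}}(W)$ is required. (The paper establishes $h_{\overline{L}}(W)=0$ later, via Zhang's inequalities and Zariski density of periodic points, when it is actually needed.)
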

    \begin{proof}
        Theorem~\ref{kawa} says that we have the dynamical adelic metric on $L$. Such metric is semipositive because it is defeind by the limit of positive metrics.
         Since we assume that $q$ is rational number, we may assume that $\mathcal{L}$ is $\mathbb{Q}$ divisor and hence we can apply Theorem~\ref{yuan}, to get the desired result.
    \end{proof}

\section{Periodic points of automorphisms}

       In previous section, we have the equidistribution of small points for the polarizable dynamical systems of several morphisms. To show the periodic points are equidistributed, we can make the generic sequence and hence we should show that the set of periodic points is Zariski dense. We can prove it in two ways: Fakhruddin \cite{Fa} introduce algebro-geometric proof by Poonen. I will introduce the arithmetic proof.

    \begin{lem}\label{induction}
         Let $\sigma :W \rightarrow W$ be an automorphism on a surface $W$ such that $(W, \{ \sigma, \sigma^{-1}\})$ is polarizable. Then, $(W, \{ \sigma^m, \sigma^{-m}\})$ is also polarizable for all $m \in \mathbb{Z}$.
    \end{lem}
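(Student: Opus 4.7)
The plan is to rephrase the polarization hypothesis as a linear relation in $\Pic(W)\otimes\mathbb{Q}$ and then to read off the new polarization constants from a Chebyshev-style recursion.

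First, I would switch to additive notation and set $T=\sigma^{*}$, viewed as an automorphism of $\Pic(W)\otimes\mathbb{Q}$. The hypothesis that $(W,\{\sigma,\sigma^{-1}\})$ is polarizable with constant $q>2$ becomes $TL+T^{-1}L=qL$, which rearranges to $T^{2}L=qTL-L$. Applying $T^{m-1}$ to this identity shows that the iterates $u_{m}:=T^{m}L$ satisfy the three-term recursion $u_{m+1}=qu_{m}-u_{m-1}$ for every $m\in\mathbb{Z}$ (forward for $m\ge 1$ and backward for $m\le 0$, using that $T$ is invertible).

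Next, set $p_{m}:=u_{m}+u_{-m}$. A direct computation gives
\[
p_{m+1}+p_{m-1}=(u_{m+1}+u_{m-1})+(u_{-m-1}+u_{-m+1})=qu_{m}+qu_{-m}=qp_{m},
\]
with initial data $p_{0}=2L$ and $p_{1}=qL$. By induction, $p_{m}=q_{m}L$ for rational scalars $q_{m}$ satisfying $q_{m+1}=qq_{m}-q_{m-1}$, $q_{0}=2$, $q_{1}=q$. Translating back to the multiplicative notation, this says
\[
(\sigma^{m})^{*}L\otimes(\sigma^{-m})^{*}L \sim L^{\otimes q_{m}},
\]
and the same ample line bundle $L$ serves as the polarization.

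It remains to verify that $q_{m}>2$ for all $m\ne 0$. Because $q>2$ is rational, the characteristic polynomial $x^{2}-qx+1$ has two positive real roots $\alpha,\alpha^{-1}$ with $\alpha>1$, and the explicit solution of the recursion is $q_{m}=\alpha^{m}+\alpha^{-m}$. Since $\alpha^{m}\ne\alpha^{-m}$ for $m\ne 0$, AM--GM gives $q_{m}>2$ strictly; the symmetry $q_{-m}=q_{m}$ handles negative indices. This verifies polarizability of $(W,\{\sigma^{m},\sigma^{-m}\})$ with constant $q_{m}>2$.

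The only real obstacle is noticing that the polarization relation forces $L$ to lie in a two-dimensional $T$-invariant subspace of $\Pic(W)_{\mathbb{Q}}$ on which $T$ satisfies a quadratic with product-of-roots equal to $1$; once this is observed, the argument is essentially the standard recursion for Lucas/Chebyshev sequences and the surface hypothesis plays no essential role beyond ensuring that $\Pic(W)_{\mathbb{Q}}$ is finite-dimensional.
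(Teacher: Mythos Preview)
Your proof is correct and follows essentially the same route as the paper: both arguments derive the three-term recursion $q_{m+1}=qq_m-q_{m-1}$ with $q_0=2$, $q_1=q$ by expanding $(\sigma^*\!\otimes(\sigma^{-1})^*)\bigl((\sigma^m)^*L\otimes(\sigma^{-m})^*L\bigr)$ in two ways, which is exactly your identity $p_{m+1}+p_{m-1}=qp_m$ written multiplicatively. The only genuine difference is the verification of $q_m>2$: the paper carries along an auxiliary inductive inequality on the increments $q_{m+1}-q_m$, whereas your closed-form solution $q_m=\alpha^m+\alpha^{-m}$ with $\alpha>1$ and AM--GM is tidier and sidesteps that bookkeeping.
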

    \begin{proof}
            Let
        \[
        \sigma^*L \otimes {\sigma^{-1}}^*L = L^{\otimes q}\quad \text{where}~ q>2.
        \]
        Then,
        \[
        {\sigma^2}^*L \otimes {\sigma^{-2}}^*L = L^{\otimes q^2-2}
        \]
        By induction, suppose
        \[
        L_m = {\sigma^l}^*m \otimes {\sigma^{-l}}^*L = L^{\otimes q_m}\quad \text{where}~ q_m>2
        \]
        and $q_{m} - q_{m-1}>2$ holds for $m=m_0-1, m_0$.  Then,
        Then,
        \[
        {\sigma}^*L_{m_0} \otimes {\sigma^{-1}}^*L_{m_0}
        = L^{\otimes q \cdot q_{m_0}}.
        \]
        On the other hand,
        \[
        {\sigma}^*L_{m_0} \otimes {\sigma^{-1}}^*L_{m_0}
        = {\sigma^{{m_0}+1}}^*L \otimes {\sigma^{{m_0}-1}}^*L  \otimes {\sigma^{-{m_0}+1}}^*L \otimes {\sigma^{-{m_0}-1}}^*L
        \]
        and hence
        \[
        {\sigma^{{m_0}+1}}^*L \otimes  {\sigma^{-{m_0}-1}}^*L = L^{\otimes q\cdot q_{m_0} - q_{{m_0}-1}}.
        \]
        Therefore,
        \[
        {\sigma^{{m_0}+1}}^*L \otimes  {\sigma^{-{m_0}-1}}^*L = L^{\otimes q_{m_0}}
        \]
        where $q_{m_0} = q\cdot q_{{m_0}+1} - q_{{m_0}} \geq 2q_{{m_0}} - q_{{m_0}-1}>2$. Moreover, $q_{{m_0}+1} - q_{{m_0}} = (q-1)q_{{m_0}} - q_{{m_0}-1} > q_{m_0} - q_{{m_0}-1} >2$.
    \end{proof}

    \begin{thm**}\label{dense}
        Let $W$ be a projective variety defined over a number field $K$, let $M = \{\phi, \phi^{-1}\}$ be an automorphism and its inverse on $W$. Suppose that $(W,M)$ is polarizable with some integer $q>2$. Then, $\Per(\phi)$ is Zariski dense.
    \end{thm**}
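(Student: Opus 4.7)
The plan is to argue by contradiction. Suppose $Y := \overline{\Per(\sigma)}$ is a proper Zariski closed subvariety of $W$; the goal is to produce a periodic point outside $Y$. Writing $\widehat{h} := \widehat{h}_{\overline{L}}$ for brevity, I would combine the canonical height of Theorem~\ref{kawa}, Lemma~\ref{induction}, Zhang's essential-minimum theorem, and at the end the equidistribution of Theorem~1.2. The preliminary identification is $\Per(\sigma) = \{P \in W(\overline{K}) : \widehat{h}(P) = 0\}$: periodic points have finite orbits of bounded Weil height, which by Theorem~\ref{kawa} forces $\widehat{h} = 0$; conversely, if $\widehat{h}(P) = 0$, the functional equation $\widehat{h}(\sigma P) + \widehat{h}(\sigma^{-1}P) = q\,\widehat{h}(P)$ and nonnegativity of $\widehat{h}$ propagate $\widehat{h}(\sigma^{n}P) = 0$ for every $n \in \mathbb{Z}$, so the $\sigma$-orbit has bounded Weil height, and Northcott's theorem over the fixed number field $K(P)$ forces the orbit to be finite.

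Next I would produce a generic small sequence avoiding $Y$. Since the dynamical metric is canonical, the arithmetic top self-intersection vanishes, $\widehat{h}(W) = 0$, and by Zhang's essential-minimum theorem the locus $\{\widehat{h} < \epsilon\}$ is Zariski dense in $W$ for every $\epsilon > 0$. A diagonal argument over the countable family of $\overline{K}$-defined proper closed subvarieties then yields a generic sequence $\{P_{m}\} \subset W(\overline{K}) \setminus Y$ with $\widehat{h}(P_{m}) \to 0$.

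The key step is to force each such $P_{m}$ to be periodic, contradicting $P_{m} \notin Y$. Setting $u_{k} := \widehat{h}(\sigma^{k} P_{m})$, the functional equation gives the linear recursion $u_{k+1} + u_{k-1} = q u_{k}$, whose general solution is $u_{k} = A\lambda_{+}^{k} + B\lambda_{-}^{k}$, where $\lambda_{\pm} = (q \pm \sqrt{q^{2}-4})/2$ are the positive real roots of $t^{2} - qt + 1 = 0$, so that $\lambda_{+} > 1 > \lambda_{-} > 0$ and $\lambda_{+}\lambda_{-} = 1$. Nonnegativity of $u_{k}$ forces $A, B \ge 0$. If one of $A, B$ vanishes but not both, one tail has $u_{k} \to 0$ while the iterates remain in the fixed number field $K(P_{m})$, contradicting Northcott's finiteness for the ample $L$ unless the orbit is finite, i.e., $P_{m}$ is periodic.

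The main obstacle is the nondegenerate case $A, B > 0$, in which $u_{k} > 0$ for every $k$ but $\widehat{h}(P_{m}) = A + B$ can still be made arbitrarily small. To eliminate it I would invoke Lemma~\ref{induction}: the iterated systems $(W, \{\sigma^{r}, \sigma^{-r}\})$ are polarized with exponents $q_{r} \to \infty$, imposing successively stronger constraints on admissible pairs $(A, B)$ compatible with a given $\widehat{h}(P_{m})$. Combining these constraints with Theorem~1.2 applied to the generic small sequence $\{P_{m}\}$ — whose Galois orbits equidistribute to the full-support canonical measure $\mu_{M,v}$ on $W_{K_{v}}^{an}$ — should force the pairs $(A_{m}, B_{m})$ to degenerate in the limit $m \to \infty$ and yield a genuine periodic point outside $Y$, closing the contradiction. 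Making this quantitative collapse precise, by matching the exponential growth of $q_{r}$ against the rate $\widehat{h}(P_{m}) \to 0$, is the technical heart of the argument.
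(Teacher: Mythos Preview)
Your approach has a genuine gap at what you yourself call the ``technical heart.'' Equidistribution of Galois orbits of a generic small sequence $\{P_m\}$ cannot force any individual $P_m$ to be periodic: a sequence of non-periodic points with $\widehat{h}(P_m)\to 0$ (so $A_m,B_m>0$ and $A_m+B_m\to 0$) is perfectly consistent with Theorem~1.2, and nothing coming from Lemma~\ref{induction} changes this --- the exponents $q_r$ govern the recursion satisfied by $u_k=\widehat{h}(\sigma^k P_m)$, but they impose no positive lower bound on $A_m+B_m$. There is simply no mechanism in your outline that manufactures a periodic point outside $Y$, so the contradiction never closes. A secondary problem is circularity: you assert $\widehat{h}(W)=0$ in order to invoke Zhang's essential minimum and build the small sequence, but in the paper this vanishing is proved \emph{after} Theorem~1.3, as a consequence of Zariski density via Zhang's inequality, not as an independent input.

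The paper's argument is entirely different and much shorter: it is pure intersection theory, with no heights of points, no Northcott, and no equidistribution. If the Zariski closure of $\Per(\sigma)$ is proper, its finitely many irreducible components $C_1,\dots,C_r$ are permuted by $\sigma$, so some iterate $\sigma^m$ fixes $C_1$; thus $\sigma^m_*C_1=C_1=\sigma^{-m}_*C_1$. The projection formula gives $L\cdot C_1=(\sigma^{m})^*L\cdot C_1$ and likewise for $\sigma^{-m}$, whence
\[
2\,(L\cdot C_1)=\bigl((\sigma^{m})^*L\otimes(\sigma^{-m})^*L\bigr)\cdot C_1=q_m\,(L\cdot C_1)
\]
by Lemma~\ref{induction}. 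Since $q_m>2$, this forces $L\cdot C_1=0$, contradicting ampleness of $L$.
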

    \begin{proof}
        Suppose that the Zariski closure of $\mathcal{S}$ is $\mathbf{C} = \{ C_1, \cdots , C_r\}$ where $C_i$ are irreducible curves.
        Then, $\sigma(C_i)$ is still in $\mathcal{C}$ and hence $\phi$ works as maps $\mathbf{C}$ into itself. Thus, there is a curve fixed by $\sigma^{m}$ for some $m>0$. Without loss of generality, $\sigma^m(C_1) = C_1$. Furthermore, since $\sigma$ is an automorphism,
        $\sigma^m_*C_1 = C_1$. Similarly, $\sigma^{-m}_*C_1 = C_1$.  Then,
        \[
        L \cdot C_1 = L^d \cdot \sigma^m_* C_1 = {\sigma^m}^*L \cdot C_1  \quad L \cdot C_1 = L \cdot \sigma^m_* C_1 = {\sigma^m}^*L \cdot C_1
        \]
        and hence
        \[
        L^{\otimes 2}  \cdot C_1 = {\sigma^m}^*L \cdot C_1  +  {\sigma^m}^*L \cdot C_1 =  \left({\sigma^m}^*L \otimes  {\sigma^m}^*L \right)\cdot C_1.
        \]

        On the other hand, by Lemma~\ref{induction}, $(W, \{\sigma^m, \sigma^{m-}\})$ is also polarizable. Thus,
        \[
        L^{\otimes 2}  \cdot C_1 = L^{\otimes q_m}  \cdot C_1 \quad \text{where}~q_m >2
        \]
        and hence $L  \cdot C_1 = 0$, which contradicts to the assumption that $L$ is ample.
    \end{proof}

    \begin{cor}
        Let $\sigma :W \rightarrow W$ be an automorphism on a projective surface $W$ such that $(W, \{ \sigma, \sigma^{-1}\})$ is polarizable
        with respect to an ample line bundle $L$. Then, a generic sequence $\{ x_m\} \subset \Per(\sigma)$ is small.
    \end{cor}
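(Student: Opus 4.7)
The plan is to reduce the corollary to two facts: (i) the canonical height $\widehat h_{\overline L}$ produced by Theorem~\ref{kawa} vanishes on $\Per(\sigma)$, and (ii) the variety itself satisfies $h_{\overline L}(W)=0$. Once both are established, for any $\{x_m\}\subset \Per(\sigma)$ one has $h_{\overline L}(x_m)=\widehat h_{\overline L}(x_m)=0=h_{\overline L}(W)$, so the defining condition of ``small'' holds for every such sequence (in particular for any generic one), which is what the corollary asserts.

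For step (i) I would mimic, at the level of heights, the recursion used in the proof of Lemma~\ref{induction}. Theorem~\ref{kawa}(2b) gives
\[
\widehat h_{\overline L}(\sigma(x))+\widehat h_{\overline L}(\sigma^{-1}(x))=q\,\widehat h_{\overline L}(x),
\]
and applying this identity to $\sigma(x)$ and $\sigma^{-1}(x)$ and summing yields $\widehat h_{\overline L}(\sigma^2(x))+\widehat h_{\overline L}(\sigma^{-2}(x))=(q^2-2)\widehat h_{\overline L}(x)$. Iterating with the same three-term recursion $q_{m+1}=q\,q_m-q_{m-1}$ that drives Lemma~\ref{induction} gives, by induction on $m$,
\[
\widehat h_{\overline L}(\sigma^m(x))+\widehat h_{\overline L}(\sigma^{-m}(x))=q_m\,\widehat h_{\overline L}(x),\qquad q_m>2.
\]
If $x$ is $\sigma$-periodic of period dividing $m$, then also $\sigma^{-m}(x)=x$, so the left side collapses to $2\widehat h_{\overline L}(x)$ and we get $(q_m-2)\widehat h_{\overline L}(x)=0$; since $q_m>2$, this forces $\widehat h_{\overline L}(x)=0$ (non-negativity from Theorem~\ref{kawa}(3) is a further sanity check).

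Step (ii) is where I expect the main technical obstacle to sit. For a single polarizable morphism the vanishing $h_{\overline L}(W)=0$ is classical: $\phi^*\overline L\equiv q\overline L$ combined with the projection formula and $\deg\phi=q^n$ gives $q^{n+1}\overline L^{n+1}=q^n\overline L^{n+1}$, hence $\overline L^{n+1}=0$. For $M=\{\sigma,\sigma^{-1}\}$ one can attempt the analogous projection-formula computation on an arithmetic model using $\sigma^*\overline L\otimes(\sigma^{-1})^*\overline L\equiv \overline L^{\otimes q}$, but expanding $(\sigma^*\overline L+(\sigma^{-1})^*\overline L)^{n+1}$ produces mixed terms $\sigma^*\overline L\cdot(\sigma^{-1})^*\overline L\cdots$ that must be controlled carefully. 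A cleaner alternative is to invoke Theorem~1.3 to exhibit a Zariski-dense supply of points with $\widehat h_{\overline L}=0$, deduce that the essential minimum of $\overline L$ vanishes, and apply Zhang's inequality to sandwich
\[
0\leq \frac{h_{\overline L}(W)}{\deg_L W}\leq \sup_x h_{\overline L}(x)=0.
\]
Either route yields $h_{\overline L}(W)=0$, and combined with step (i) this completes the proof of the corollary.
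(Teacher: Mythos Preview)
Your proposal is correct and, for the crux of the argument, follows exactly the paper's route: the paper also reduces the corollary to $\widehat h_{\overline L}(x_m)=0$ for periodic $x_m$ and $\widehat h_{\overline L}(W)=0$, and obtains the latter from Theorem~1.3 (Zariski density of $\Per(\sigma)$) together with Zhang's inequality on successive minima, precisely your ``cleaner alternative''. One cosmetic point: in your displayed sandwich, the upper bound should be the essential minimum $e_1(\overline L)$, not $\sup_x h_{\overline L}(x)$; your prose makes clear you mean the former.

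The only genuine difference is in step (i). The paper simply asserts ``We know that $\widehat h_{\overline L}(x_m)=0$ if $x_m\in\Per(\sigma)$'' without justification, whereas you supply a clean argument via the height recursion $q_{m+1}=q\,q_m-q_{m-1}$ mirroring Lemma~\ref{induction}. This is a nice touch: it makes explicit that the vanishing of $\widehat h_{\overline L}$ on periodic points is not entirely trivial in the several-morphisms setting (the usual one-line argument for a single polarized endomorphism does not apply directly), and it reuses the same three-term recursion already present in the paper rather than appealing to an outside fact.
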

    \begin{proof}
        We know that $\widehat{h}_{\overline{L}}(x_m)=0$ if $x_m \in \Per(\sigma)$. Thus, we only have to show that
        $\widehat{h}_{\overline{L}}(W)=0$.

        \cite[Theorem 1.10]{Z1} says that
        \[
        e_1(L)   \geq
        \widehat{h}_{\overline{L}}(W) \geq \dfrac{1}{n} \sum_{i=1}^n e_i (\overline{L})
        \]
        where
        \[
        e_i(\overline{L})
        = \sup_{\substack{Y \subsetneq W \\ \operatorname{Codim} Y = i}} \inf_{x \in W \setminus Y} \widehat{h}_{\overline{L}} (x).
        \]

        By Theorem~\ref{dense}, $\Per(\sigma)$ is Zariski Dense in $W$ and hence $e_i(\overline{L}) =0$ for all $i = 1 \cdots n$ and hence $\widehat{h}_{\overline{L}}(W) =0$.
    \end{proof}

    Now, Theorem~\ref{dense} say that we can build a generic and small sequence of periodic points. Therefore, we can prove the equidistribution of periodic point:

    \begin{cor}
        Let $W$ be a projective variety defined over a number field $K$, let $M = \{\sigma, \sigma^{-1}\}$ be an automorphism and its inverse on $W$. Suppose that $(W,M)$ is polarizable with some integer $q>2$. Then, $\Per(\sigma)$ is equidistributed.
    \end{cor}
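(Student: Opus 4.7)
The plan is to reduce this corollary directly to Theorem~1.2 by producing a single generic and small sequence of $\sigma$-periodic points. All the ingredients are already in place: Theorem~\ref{dense} supplies Zariski density of $\Per(\sigma)$; the preceding corollary tells us that any generic sequence in $\Per(\sigma)$ is automatically small; and Theorem~1.2 then supplies the equidistribution for such a sequence.

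First I would extract a generic sequence $\{x_m\} \subset \Per(\sigma)$ by induction. Having chosen $x_1,\dots,x_m$, let $Y_m$ be the Zariski closure of $\{x_1,\dots,x_m\}$ in $W$. If $Y_m \subsetneq W$, Theorem~\ref{dense} provides a periodic point $x_{m+1} \in \Per(\sigma) \setminus Y_m$. The noetherian property of $W$ together with this always-available escape route guarantees that the ascending chain $Y_1 \subseteq Y_2 \subseteq \cdots$ eventually exhausts $W$; a standard diagonal argument then ensures that no infinite subsequence of $\{x_m\}$ lies in any proper closed subvariety, so the sequence is generic in the sense of Section~3.

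Next, the preceding corollary applies verbatim: every $x_m$ is $\sigma$-periodic, so $\widehat{h}_{\overline{L}}(x_m) = 0$, while $\widehat{h}_{\overline{L}}(W) = 0$ follows from Zhang's minima inequality combined with the Zariski density of $\Per(\sigma)$. Thus $h_{\overline{L}}(x_m) \to h_{\overline{L}}(W)$ and $\{x_m\}$ is small.

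Finally, applying Theorem~1.2 to this generic and small sequence yields, at every place $v$ of $K$, the weak convergence
\[
\frac{1}{\deg x_m} \sum_{y \in \Gamma_{x_m}} \delta_y \longrightarrow \mu_{M,v},
\]
which is precisely what is meant by equidistribution of $\Per(\sigma)$. The only nontrivial step is the inductive construction of the generic sequence, and this is essentially routine once Zariski density is known: noetherianity of $W$ forbids the Zariski closures from stabilizing at a proper subvariety without contradicting Theorem~\ref{dense}. Everything else is a direct citation of results already established earlier in the paper.
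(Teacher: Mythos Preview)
Your approach is exactly the paper's: build a generic and small sequence in $\Per(\sigma)$ using Zariski density (Theorem~\ref{dense}) together with the preceding corollary, and then invoke Theorem~1.2. The paper's own proof is two sentences to the same effect.

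One caveat on your explicit construction of the generic sequence: with $Y_m$ the Zariski closure of the finite set $\{x_1,\dots,x_m\}$, $Y_m$ \emph{is} just those finitely many points, so requiring $x_{m+1}\notin Y_m$ only guarantees that the $x_m$ are pairwise distinct, not that the sequence is generic; nothing prevents all the $x_m$ from landing on a fixed curve. The chain $Y_1\subseteq Y_2\subseteq\cdots$ never ``exhausts $W$'' and noetherianity does not help here (it bounds ascending chains of ideals, i.e.\ descending chains of closed sets, and in any case this chain neither stabilizes nor reaches $W$). The standard fix is to enumerate the countably many proper closed $\overline{K}$-subvarieties $Z_1,Z_2,\dots$ of $W$ and choose $x_m\in\Per(\sigma)\setminus(Z_1\cup\cdots\cup Z_m)$, which Zariski density makes possible; then any proper closed $Z$ equals some $Z_j$ and contains at most $x_1,\dots,x_{j-1}$. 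This is presumably the ``diagonal argument'' you intended, and the paper itself simply omits the construction.
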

    \begin{proof}
           It is an easy consequence of Theorem~1.2. and Theorem~1.3: by Theorem~1.3, we can build a generic and small sequence $\{x_m\}$ in $\Pre(\sigma)$. And, the sequence of probability measure on Galois orbit of $x_m$ weakly converges to the dynamical measure by Theorem~1.2.
    \end{proof}


\begin{thebibliography}{ABCDEFG}

        \bibitem{B}
        Bilu, Y., {\em Limit distribution of small points on algebraic tori}, /duke Math., 89 (1997), 465--476

        \bibitem{BM}
        Baragar, A.; McKinnon, David., {\em K3 surfaces, rational curves, and rational points}, J. Number Theory 130 (2010), no. 7, 1470--1479

         \bibitem{BR}
        Baker, M. H.; Rumely, R.,{\em Equidistribution of small points, rational dynamics, and potential theory}, Ann. Inst. Fourier (Grenoble) 56 (2006), no. 3, 625--688

        \bibitem{Ch}
        Chamert-Loir, A., {\em Mesures et equidistribution sur les espaces de Berkovich}, J. Reine Angew. Math. 595 (2006), 215--235

        \bibitem{F}
             Fulton, W., {\em Intersection theory },  Second edition, Springer-Verlag, Berlin, 1998.

        \bibitem{Fa}
            Fakhruddin, N.,{\em  Questions on self maps of algebraic varieties}, J. Ramanujan Math. Soc. 18 (2003), no. 2, 109--122

        \bibitem{FR}
            Favre, C.; Rivera-Letelier, J., {\em Equidistribution quantitative des points de petite hauteur sur la droite projective}, Math. Ann. 335 (2006), no. 2, 311--361

        \bibitem{K0}
        Kawaguchi, S., {\em Canonical heights, invariant currents, and dynamical eigensystems of morphisms for line bundles}, J. Reine Angew. Math. 597 (2006), 135--173,

        \bibitem{S2}
            Silverman, J. H. {\em The arithmetic of dynamical system}, Springer, 2007.

        \bibitem{S5}
            Silverman, J. H., {\em Rational points on K3 surfaces: a new canonical height.}, Invent. Math., 105(2) (1991), 347--373

        \bibitem{SH}
            Silverman, J. H.; Hindry, M. {\em Diophantine geometry, An introduction}, Springer, 2000.

        \bibitem{SUZ}
            Szpiro, L.; Ullmo, E.; Zhang, S., {\em Equirepartition des petits points}, Invent. Math. 127 (1997), no. 2, 337--347

        \bibitem{Y}
           Yuan, X., {\em Big line bundles over arithmetic varieties}, Invent. Math. 173, no. 3 (2008), 603--649

        \bibitem{Z1}
           Zhang, S., {\em  Small points and adelic metrics} J. Algebraic Geom. 4 (1995), no. 2, 281?300
    \end{thebibliography}
\end{document}